\def\<{\left<}
\def\>{\right>}
\def\ep{\varepsilon}
\def\R{\mathbb{R}}
\def\S{\mathbb{S}}
\def\H{\mathbb{H}}
\def\M{M_s^2}
\def\Mc{\mathbb{M}^3_c}
\def\tr{\text{\rm tr}}
\def\det{\text{\rm det}}
\def\cm{{\mathcal C}^\infty(M_s^2)}
\def\div{\text{\rm div}}
\newcommand{\ol}{\overline}
\newcommand{\va}{a}
\newcommand{\la}{\lambda}
\def\ka{\kappa}
\def\ColSep{\kern2pt}
\def\Sep#1{\kern#1pt}
\newtheorem{teor}{Theorem}
\newtheorem{lema}[teor]{Lemma}
\newtheorem{exa}{Example}
\newtheorem{cla}{Claim}
\newenvironment{ejem}{\begin{exa}\upshape}{\end{exa}}
\title[$L_1$-2-type surfaces in 3-dimensional De Sitter and anti De Sitter spaces]{$\pmb{L_1}$-2-type surfaces in 3-dimensional De Sitter and anti De Sitter spaces}
\author{S. Carolina Garc\'ia-Mart\'inez\and Pascual Lucas \and H. Fabi\'an Ram\'irez-Ospina}
\begin{document}
\maketitle

\begin{abstract}
Let $\M$ be an orientable surface immersed in the De Sitter space $\mathbb{S}_1^3\subset\R^4_1$ or anti de Sitter space $\mathbb{H}_1^3\subset\R^4_2$. In the case that $\M$ is of $L_1$-2-type
we prove that the following conditions are equivalent to each other: $\M$ has a constant principal curvature; $\M$ has constant mean curvature; $\M$ has constant second mean curvature. As a consequence, we also show that an $L_1$-2-type surface is either an open portion of a standard pseudo-Riemannian product, or a $B$-scroll over a null curve, or else its mean curvature, its Gaussian curvature and its principal curvatures are all non-constant.
\end{abstract}

\def\thefootnote{}
\footnotetext{%
 \hspace*{-\parindent}\emph{Mathematics Subject Classification 2020}: 53C50, 53B25, 53B30.\\
 \noindent\emph{Keywords:} Anti De Sitter space, De Sitter space, $L_1$-2-type surface, $B$-scroll, Cheng-Yau operator, Newton transformation.}

\section{Introduction}
\label{s:introduction}
The submanifolds of finite type in the Euclidean or pseudo-Euclidean space are submanifolds whose isometric immersion in the ambient space is constructed by using eigenfunctions of their Laplacian. This notion provides a very natural way to apply spectral geometry to the study of submanifolds (see \cite{C1984,CP}). This concept, originally defined for the Laplacian operator $\Delta$, can be generalized in a natural way to other operators such as the differential operator $\Box$ (sometimes denoted by $L_1$) introduced by Cheng and Yau in \cite{CY} for the study of surfaces with constant scalar curvature.

Let $\Mc$ denote the 3-dimensional De Sitter or anti De Sitter space, isometrically immersed in a pseudo-Euclidean space $\R^4_q$ of index $q$. An orientable surface $\psi:M^2_s\rightarrow\mathbb{M}_c^3\subset\R^4_q$ is said to be of $L_1$-2-type if the position vector $\psi$ admits the following spectral decomposition
\[
\psi=a+\psi_1+\psi_2,\quad L_1\psi_1=\lambda_1\psi_1, \quad L_1\psi_2=\lambda_2\psi_2,\quad \lambda_1\neq\lambda_2,\quad \lambda_i\in\mathbb{R},
\]
where $a$ is a constant vector in $\mathbb{R}_q^4$, and $\psi_1,\psi_2$ are $\R_q^4$-valued non-constant differentiable functions on $M_s^2$.

The second and third author in \cite{LR2017, LR2018} studied these kind of surfaces in the non-flat Riemannian space forms. It should be noted that the shape operator in the Riemannian case is always diagonalizable so that neither the techniques used nor the results obtained can be directly transferred to the Lorentzian case, since in this case the shape operator may be non-diagonalizable.

The main theorems of this paper are the following.

\noindent\textbf{Theorem A.}
\emph{Let $\psi:\M\to\Mc\subset\R^4_q$ be an orientable surface  of $L_1$-$2$-type. The following conditions are equivalent to each other:\vspace*{-\parskip}
\begin{enumerate}
\item[$1)$] $\M$ has a constant principal curvature.
\item[$2)$] $H$ is constant.
\item[$3)$] $H_2$ is constant.
\end{enumerate}}

As a consequence of this theorem, the known examples of $L_1$-2-type surfaces in $\Mc$ (see examples \ref{ej2}--\ref{ej4}) and the classification of isoparametric surfaces in $\Mc$ (see e.g. \cite{AFL94,X99,ZX}), we have the following characterization results of $L_1$-$2$-type surfaces in $\Mc$.

\noindent\textbf{Theorem B.}
\emph{Let $\psi: \M\to\S^3_1 \subset\R^4_1$ be an orientable surface of $L_1$-2-type. Then one of the following conditions is satisfied\vspace*{-\parskip}
\begin{itemize}\itemsep0pt
\item $\M$ an open portion of a standard pseudo-Riemannian product: \\ $\S^1_1(\sqrt{1-r^2})\times \S^1(r)$ or $\H^1(\sqrt{1-r^2})\times \S^1(r)$.
\item $M_1^2$ is a $B$-scroll over a null curve.
\item  $\M$ has non constant mean curvature, non constant Gaussian curvature, and non constant principal curvatures.
\end{itemize}}

\noindent\textbf{Theorem C.}
\emph{Let $\psi: M_s^2\to\H^3_1 \subset\R^4_2$ be an orientable surface of $L_1$-2-type. Then one of the following conditions is satisfied\vspace*{-\parskip}
\begin{itemize}\itemsep0pt
\item $\M$ is an open portion of a standard pseudo-Riemannian product:\\
    $\S^1_1(r)\times \H^1(-\sqrt{r^2+1)}$, or $\H^1(-r)\times\H^1(-\sqrt{1-r^2})$, or $\H^1_1(-r)\times \S^1(\sqrt{r^2-1})$
\item $M_1^2$ is a non-flat $B$-scroll over a null curve.
\item $\M$ has non constant mean curvature, non constant Gaussian curvature, and non constant principal curvatures.
\end{itemize}}

\section{Preliminaries}
\label{s:preliminaries}

In this section we recall some formulae and notions about surfaces in Lorentzian
space forms that will be used later on. Let $\R^{4}_q$ be the 4-dimensional pseudo-Euclidean space of index $q\in\{1,2,3\}$, with flat metric given by
\[
\<\cdot,\cdot\>=-\sum^q_{i=1}dx^2_i+\sum_{j=q+1}^4dx^2_j,
\]
where $x = (x_1, x_2, x_3, x_4)$ denotes the usual rectangular coordinates in $\R^4$. The De Sitter space of radius $r$ is defined by
\[
\S^{3}_1(r)= \big\{x\in\R^4_1\;:\; \<x, x\> = r^2\big\},
\]
and the anti De Sitter space of radius $-r$ is defined by
\[
\H^{3}_1(-r)= \big\{x\in\R^4_2\;:\; \<x, x\> = -r^2\big\}.
\]
It is well known that $\S^3_1(r)$ and $\H^3_1(-r)$ are Lorentzian totally umbilical hypersurfaces with constant sectional curvature $+1/r^2$ and $-1/r^2$, respectively.
In order to simplify our notation and computations, we will denote by $\Mc$
the De Sitter space $\S^3_1\equiv\S^3_1(1)$ or the anti De Sitter space $\H^3_1\equiv\H^3_1(-1)$ according to $c = 1$ or $c = -1$, respectively. We will use $\R^4_q$ to denote the corresponding pseudo-Euclidean space where $\Mc$ lives, so its metric is given by
\[
\<\cdot,\cdot\>= -dx^2_1 + c\;dx^2_2+dx^2_3+dx^2_4,
\]
and we can write
\[
\Mc= \big\{x\in\R^4_q \;:\; -x^2_1+cx^2_2+ x^2_3+x^2_4= c\big\}.
\]

Let $\psi:M_s^2\longrightarrow\Mc\subset\R^{4}_q$ be an isometric immersion of a surface $M_s^2$ into $\Mc$, and let $N$ be a unit vector field normal to $M_s^2$ in $\Mc$, where $\<N,N\>=\varepsilon=\pm1$. Here $s=0$ (resp. $s=1$) if the induced metric on the surface is Riemannian (resp. Lorentzian).  Let $\nabla^0$, $\ol\nabla$ and $\nabla$ denote the Levi-Civita connections on $\R^{4}_q$, $\Mc$ and $M_s^2$, respectively. Then the Gauss and Weingarten formulas are given by
\begin{equation}\label{F.G}
\nabla^0_XY=\nabla_XY+\varepsilon\< SX,Y\> N-c\< X,Y\> \psi,
\end{equation}
and
\[
SX=-\ol{\nabla}_XN=-\nabla^0_XN,
\]
for all tangent vector fields $X,Y\in\mathfrak{X}(M_s^2)$, where $S:\mathfrak{X}(M^2_s)\longrightarrow \mathfrak{X}(M_s^2)$ stands for the shape operator (or Weingarten endomorphism) of $M_s^2$, with respect to the chosen orientation $N$. If a vector field $X$ everywhere different from zero satisfies the condition $SX=\lambda X$, for a differentiable function $\lambda$, we will say that $X$ is a principal direction of $\M$ with associated principal curvature $\lambda$.

It is well-known (see, for instance, \cite[pp. 261--262]{ONeill}) that the shape operator $S$ of the surface $M^2_s$ can be expressed, in an appropriate frame, in one of the following types:
\begin{align}\label{Formas}
 \text{I. } S\approx
 \left[\begin{array}{@{\ColSep}c@{\Sep4}c@{\ColSep}}
    \kappa_1 &  0  \\
          0  & \kappa_2
 \end{array}\right];
 \kern1cm
 \text{II. } S\approx
 \left[\begin{array}{@{\ColSep}c@{\Sep4}r@{\ColSep}}
   \kappa &  -b   \\
   b      & \kappa
   \end{array}\right],\quad b\neq 0;
   \kern1cm
 \text{III. } S\approx
 \left[\begin{array}{@{\ColSep}c@{\Sep4}c@{\ColSep}}
   \kappa &  0    \\
   1      & \kappa
   \end{array}\right].
\end{align}
In cases I and II, $S$ is represented with respect to an orthonormal frame, whereas in case III the frame is pseudo-orthonormal. The characteristic polynomial $Q_S(t)$ of the shape operator $S$ is defined by $Q_S(t)={\rm det}(tI-S)=t^2+a_1t+a_2$, where the coefficients $a_i$ are given by
\begin{equation}\label{CPC1}
a_1=-\tr(S),\qquad a_2=\det(S).
\end{equation}
The mean curvature $H$ of $\M$ in $\Mc$ is given by $H=(\ep/2)\tr(S)$, and the coefficient $a_2$ is also called the \emph{second mean curvature or mean curvature of order $2$}, $H_2$.
The Newton transformation of $\M$ is the operator $P_1:\mathfrak{X}(\M)\longrightarrow \mathfrak{X}(\M)$ defined by
\begin{equation}
P_1=-2\ep HI+S,
\end{equation}
and by the Cayley-Hamilton theorem we have $S\circ P_1=-H_2I$.

The divergence of a vector field $X$ is the differentiable function defined as the trace of the operator $\nabla X$, where $\nabla X(Y):=\nabla_YX$. Analogously, the divergence of an operator $T:\mathfrak{X}(M^2_s)\longrightarrow\mathfrak{X}(M_s^2)$ is the vector field $\div(T)\in\mathfrak{X}(M_s^2)$ defined as the trace of $\nabla T$, where $\nabla T(X,Y)=(\nabla_{X}T)Y$.

We have the following properties of $P_1$ (see \cite{LR2011,LR2012}).

\begin{lema}\label{L1}
The Newton transformation $P_1$ satisfies:\vspace*{-.5\baselineskip}
\begin{list}{1}{\leftmargin5pt} 
\item[(a)] $P_1$ is self-adjoint and commutes with $S$.
\item[(b)] $\tr(P_1)=-2\varepsilon H$.
\item[(c)] $\tr(S\circ P_1)=-2H_{2}$.
\item[(d)] $\tr(S^2\circ P_1)=-2\varepsilon HH_2$.
\item[(e)] $\tr(\nabla_X S\circ P_1)=-\left\langle \nabla H_2,X\right\rangle$.
\item[(f)] $\div(P_1)=0$.
\end{list}\vspace*{-.75\baselineskip}
\end{lema}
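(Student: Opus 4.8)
The plan is to deduce all six items from the single algebraic identity $P_1=-2\varepsilon H I+S$ together with the Cayley--Hamilton relation $S\circ P_1=-H_2 I$ already recorded above; only the last item will require a genuinely geometric input, namely the Codazzi equation for hypersurfaces of a space form. Items (a)--(d) are pointwise statements, while (e)--(f) involve one covariant differentiation.

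For (a), the shape operator $S$ is self-adjoint with respect to the (non-degenerate) induced metric because the second fundamental form is symmetric, so $P_1$, being a linear combination of $S$ and $I$, is self-adjoint, and it commutes with $S$ because $I$ does. For (b), take the trace of $P_1=-2\varepsilon H I+S$ and use $\tr(I)=2$ and $\tr(S)=2\varepsilon H$. For (c), take the trace of $S\circ P_1=-H_2 I$. For (d), use that $P_1$ commutes with $S$ to write $S^2\circ P_1=S\circ(S\circ P_1)=-H_2\,S$, whose trace is $-H_2\tr(S)=-2\varepsilon H H_2$.

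For (e), differentiate $S\circ P_1=-H_2 I$ in the direction $X$ to get $(\nabla_X S)\circ P_1+S\circ(\nabla_X P_1)=-(XH_2)\,I$, and expand $\nabla_X P_1=-2\varepsilon(XH)\,I+\nabla_X S$. Using $\tr(\nabla_X S)=X(\tr S)=2\varepsilon(XH)$, $\tr(S)=2\varepsilon H$, and the cyclicity of the trace, one checks that both $\tr\big((\nabla_X S)\circ P_1\big)$ and $\tr\big(S\circ(\nabla_X P_1)\big)$ equal $-4H(XH)+\tr(S\circ\nabla_X S)$. Taking the trace of the differentiated identity then yields $\tr(S\circ\nabla_X S)=4H(XH)-XH_2$, and substituting this back gives $\tr\big((\nabla_X S)\circ P_1\big)=-XH_2=-\langle\nabla H_2,X\rangle$.

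For (f), write $\div(P_1)=-2\varepsilon\,\nabla H+\div(S)$, since $(\nabla_X P_1)X=-2\varepsilon(XH)X+(\nabla_X S)X$. The key point, and the only place where the constant curvature of $\Mc$ enters, is the Codazzi equation $(\nabla_X S)Y=(\nabla_Y S)X$ for the hypersurface $\M\subset\Mc$; combined with the self-adjointness of $\nabla_{e_i}S$ it gives, for every tangent $X$ and any adapted frame $\{e_i\}$ with $\langle e_i,e_i\rangle=\epsilon_i$, $\langle\div(S),X\rangle=\sum_i\epsilon_i\langle(\nabla_{e_i}S)e_i,X\rangle=\sum_i\epsilon_i\langle(\nabla_X S)e_i,e_i\rangle=X(\tr S)=\langle\nabla(\tr S),X\rangle$, so that $\div(S)=\nabla(\tr S)=2\varepsilon\,\nabla H$ and hence $\div(P_1)=0$. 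I expect this last identity to be the main (though still standard) obstacle, being the only step that uses more than linear algebra; alternatively one may simply invoke \cite{LR2011,LR2012}.
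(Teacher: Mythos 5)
Your proof is correct; the paper itself gives no proof of this lemma, simply citing \cite{LR2011,LR2012}, and your argument is essentially the standard one found there: items (a)--(d) are pointwise linear algebra from $P_1=-2\ep HI+S$ and the Cayley--Hamilton identity $S\circ P_1=-H_2I$, item (e) follows by differentiating that identity (equivalently, from $XH_2=4H\,XH-\tr(S\circ\nabla_XS)$), and item (f) is the only step needing the Codazzi equation $(\nabla_XS)Y=(\nabla_YS)X$, valid here because $\Mc$ has constant curvature. All signs check out with the conventions $\tr(S)=2\ep H$ and $\ep^2=1$, so nothing further is needed.
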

Bearing this lemma in mind we obtain
\[
\div(P_1(\nabla f))=\tr\big(P_1\circ\nabla^2 f \big),
\]
where $\nabla^2f:\mathfrak{X}(M)\longrightarrow\mathfrak{X}(M)$ denotes the self-adjoint linear operator metrically equivalent to the Hessian of $f$, given by $\<\nabla^2f(X),Y\>=\<\nabla_X(\nabla f),Y\>$, for all $X,Y\in\mathfrak{X}(M_s^2).$
Associated to the Newton transformation $P_1$, we can define the second-order linear differential operator $L_1:\cm\longrightarrow\cm$ given by
\begin{align}\label{E9}
L_1(f)=\tr\big(P_1\circ\nabla^2 f \big).
\end{align}

An interesting property of $L_1$ is the following
\begin{align}
L_1(fg)=gL_1(f)+ fL_1(g) +2\<P_1(\nabla f),\nabla g\>,\label{Lkfg}
\end{align}
for every couple of differentiable functions $f,g\in C^\infty(M_s^2)$.

Now, we are going to compute $L_1$ acting on the coordinate components of the immersion $\psi$, that is, a function given by $\<e,\psi\>$, where $e\in \mathbb{R}^{4}_q$ is an arbitrary fixed vector.

A direct computation shows that
\begin{equation}\label{E10-}
\nabla\<e,\psi\>=e^\top=e-\varepsilon\<e,N\>N-c\<e,\psi\>\psi,
\end{equation}
where $e^\top\in \mathfrak{X}(M_s^2)$ denotes the tangential component of $e$. Taking covariant derivative in~(\ref{E10-}), and using that $\nabla^0_Xe=0$, jointly with the Gauss and Weingarten formulae, we obtain
\begin{equation}\label{E11}
\nabla_X\nabla\<e,\psi\>=\varepsilon\<e,N\> SX-c\<e,\psi\> X,
\end{equation}
for every tangent vector field $X\in \mathfrak{X}(M_s^2)$. Finally, by using (\ref{E9}) and Lemma~\ref{L1}, we find that
\begin{align}
L_1\<e,\psi\>=\ep\<e,N\>\tr(S\circ P_1)-c\<e,\psi\>\tr(P_1)
               =-2\ep H_2\<e,N\>+2\varepsilon cH\<e,\psi\>,\label{E12}
\end{align}
and then we get
\begin{equation}\label{E13}
L_1\psi=-2\varepsilon H_{2}N+2\varepsilon c H\psi.
\end{equation}
We will now compute $L_1$ acting on a function given by $\<e,N\>$. A straightforward computation yields
\begin{equation}\label{nablaconN}
\nabla\<e,N\>=-Se^\top,
 \end{equation}
that jointly with the Weingarten formula leads to
\begin{align*}
\nabla_X\nabla\<e,N\>=-(\nabla_{e^\top}S)X-\varepsilon\<e,N\>S^2X+c\<e,\psi\>SX,
\end{align*}
for every tangent vector field $X$. This equation, jointly with Lemma~\ref{L1} and~(\ref{E9}), yields
\begin{equation}\label{LNa}
L_1\<e,N\>
    =\<\nabla H_{2},e^\top\>+2HH_2\<e,N\>-2c H_{2}\<e,\psi\>,
\end{equation}
and then
\begin{equation}\label{E16}
L_1N=\nabla H_{2}+2HH_2N-2cH_{2}\psi.
\end{equation}
On the other hand, equations~(\ref{Lkfg}) and (\ref{E12}) lead to
\begin{align*}
\ep L_1(L_1\<e,\psi\>)&=-2H_{2}L_1\<e,N\>-2L_1(H_{2})\<e,N\>-4\big\langle P_1(\nabla H_{2}),\nabla\<e,N\>\big\rangle\\
&\qquad +2cHL_1\<e,\psi\>+2cL_1(H)\<e,\psi\>+4c\big\langle P_1(\nabla H),\nabla\<e,\psi\>\big\rangle.
\end{align*}
From here, and using (\ref{E12}), \eqref{E10-} and~(\ref{LNa}), we get
\begin{align*}
\ep L_1\big(L_1\<e,\psi\>\big)&=-2H_{2}\big\langle \nabla H_{2},e\big\rangle+4\big\langle(S\circ P_1)(\nabla H_{2}),e\big\rangle+4c\big\langle P_1(\nabla H),e\big\rangle \\
&\quad +\big[-4\ep HH_{2}(c+\ep H_2)-2L_1(H_{2})\big]\big\langle e,N\big\rangle\\
&\quad +\big[4cH^2_{2}+4\ep H^2+2cL_1(H)\big]\< e,\psi\>.
\end{align*}
Therefore, we obtain
\begin{align}\label{J1}
\ep L_1^2\psi&=\big[4cP_1(\nabla H)-3\nabla H_2^2]+\big[-4\ep HH_2(c+\ep H_2)-2L_1(H_2)\big]N\nonumber\\
&\quad+\big[4cH_2^2+4\ep H^2+2cL_1(H)\big]\psi.
\end{align}

\subsection*{Equations characterizing the $\pmb{L_1}$-2-type surfaces}

Let us suppose that $M_s^2$ is a $L_1$-2-type surface in $\R^4_q$, that is, the position vector field $\psi$ of $M^2_s$ in $\R^4_q$ can  be written as follows
\[
\psi=a+\psi_1+\psi_2,\quad L_1\psi_1=\lambda_1\psi_1, \quad L_1\psi_2=\lambda_2\psi_2,\quad \lambda_1\neq\lambda_2,\ \lambda_i\in\mathbb{R},
\]
where $a$ is a constant vector in $\mathbb{R}_q^4$, and $\psi_1,\psi_2$ are $\R_q^4$-valued non-constant differentiable functions on $M^2_s$.
Since $L_1\psi=\lambda_1\psi_1+\lambda_2\psi_2$ and $L_1^2\psi=\lambda_1^2\psi_1+\lambda_2^2\psi_2$, an easy computation shows that
\begin{align}\label{Es20}
L_1^2\psi=(\lambda_1+\lambda_2)L_1\psi-\lambda_1\lambda_2(\psi-\va).
\end{align}
From \eqref{E13} and \eqref{Es20} we obtain
\begin{align*}
L_1^2\psi=&\lambda_1\lambda_2\va^\top+[-2\ep(\lambda_1+\lambda_2)H_2+\ep\lambda_1\lambda_2\left\langle \va,N\right\rangle]N\\
&+[2\ep c(\lambda_1+\lambda_2)H-\lambda_1\lambda_2+c\lambda_1\lambda_2\left\langle \va,\psi\right\rangle]\psi.
\end{align*}
This equation, jointly with (\ref{J1}), yields the following equations, that characterize the $L_1$-2-type surfaces in $\Mc$:
\begin{align}
\lambda_1\lambda_2\va^\top&=-3\ep \nabla H_2^2+4\varepsilon  cP_1(\nabla H),\label{tan} \\
\lambda_1\lambda_2 \left\langle \va,N\right\rangle&=2(\lambda_1+\lambda_2)H_2-4\ep HH_2(c+\ep H_2)-2L_1(H_2),\label{nor1}\\
\lambda_1\lambda_2\left\langle \va,\psi\right\rangle&=4\ep H_2^2+4cH^2-2\ep(\lambda_1+\lambda_2)H+c\lambda_1\lambda_2+2\varepsilon L_1 (H).\label{nor2}
\end{align}

\section{Some Examples}
In this section we will show examples not only of $L_1$-2-type surfaces into $\mathbb{M}_c^3$ but also some surfaces that will be useful later in order to give the classification results.

\begin{ejem}(\emph{Totally umbilical surfaces in $\Mc$})\quad\label{ej2}
Let us begin by showing that the totally umbilical surfaces in $\Mc$ are of $L_1$-1-type.
As is well known, the totally umbilical surfaces in $\Mc$ are obtained as the intersection of $\Mc$ with a hyperplane of $\mathbb{R}^{4}_q$, and the causal character of the hyperplane determines the type of the surface. More precisely, let $\va\in \mathbb{R}^{4}_q$ be a non-zero constant vector with $\<\va,\va\>\in\{-1,0,1\}$, and we take the differentiable function $f_{\va}:\Mc\rightarrow\mathbb{R}$ defined by $f_{\va}(x)=\<a,x\>$. It is not difficult to see that for every $\tau\in \mathbb{R}$, with $\<\va,\va\>-c\tau^2\neq 0$, the set
\(
M(\tau)=f^{-1}_\va(\tau)=\{x\in\Mc\;|\;\<\va,x\>=\tau\}
\)
is a totally umbilical surface in $\Mc$. Its Gauss map $N$ and shape operator $S$ are given by
$N(x)=(1/\delta)(a-c\tau x)$ and $SX=(c\tau/\delta)X$,
where $\delta=\sqrt{|\<\va,\va\>-c\tau^2|}$. Now, it is easy to obtain
$H=\ep c\tau/\delta$ and $H_2=\tau^2/\delta^2$,
where $\ep=\<N,N\>$. From here we get that $M(\tau)$ has constant Gaussian curvature
\[
K=c+\ep H_2=\frac{c\<a,a\>}{\< \va,\va\>-c\tau^2},
\]
and ${M(\tau)}$ is a Riemannian or Lorentzian surface according to $\<\va,\va\>-c\tau^2$ is negative or positive, respectively. Now we will analyze the distinct possibilities.

\noindent\textbullet\kern8pt Case $c=1$. Then ${M(\tau)}\subset\Mc=\mathbb{S}_1^{3}\subset\mathbb{R}_1^{4}$ and we have:\vspace*{-\topskip}
\begin{enumerate}
\item[i)] If $\<\va,\va\>=-1$, then $K=1/(\tau^2+1)$, $\ep=-1$, and ${M(\tau)}$ is isometric to a round sphere of radius $\sqrt{\tau^2+1}$, ${M(\tau)}\equiv\mathbb{S}^2(\sqrt{\tau^2+1})$.
\item[ii)] If $\<\va,\va\>=0$, then $\tau\neq 0$, $K=0$, $\ep=-1$, and ${M(\tau)}$ is isometric to the Euclidean plane, ${M(\tau)}\equiv\mathbb{R}^2$.
Then ${M(\tau)}$ is a flat totally umbilic surface.
\item[iii)] If $\<\va,\va\>=1$, then either $|\tau|>1$, $K=-1/(\tau^2-1)$, $\ep=-1$, and ${M(\tau)}$ is isometric to the hyperbolic plane of radius $-\sqrt{\tau^2-1}$, ${M(\tau)}\equiv\mathbb{H}^2(-\sqrt{\tau^2-1})$, or $|\tau|<1$, $K=1/(1-\tau^2)$, $\ep=1$, and ${M(\tau)}$ is isometric to the De Sitter space of radius $\sqrt{1-\tau^2}$, ${M(\tau)}\equiv\mathbb{S}^2_1(\sqrt{1-\tau^2})$.
\end{enumerate}
\noindent\textbullet\kern8pt Case $c=-1$. Then ${M(\tau)}\subset\Mc=\mathbb{H}_1^{3}\subset\mathbb{R}_2^{4}$ and we have:\vspace*{-\topskip}
\begin{enumerate}
\item[i)] If $\<\va,\va\>=-1$, then either $|\tau|>1$, $K=1/(\tau^2-1)$, $\ep=1$, and ${M(\tau)}$ is isometric to a De Sitter space of radius $\sqrt{\tau^2-1}$, ${M(\tau)}\equiv\mathbb{S}^2_1(\sqrt{\tau^2-1})$, or $|\tau|<1$, $K=-1/(1-\tau^2)$, $\ep=-1$, and $M_\tau$ is isometric to a hyperbolic plane of radius $-\sqrt{1-\tau^2}$, ${M(\tau)}\equiv\mathbb{H}^2(-\sqrt{1-\tau^2})$.
\item[ii)] If $\<\va,\va\>=0$, then $\tau\neq0$, $K=0$, $\ep=1$, and ${M(\tau)}$ is isometric to the Lorentz-Minkowski plane, ${M(\tau)}\equiv\mathbb{R}^2_1$. Then ${M(\tau)}$ is a flat totally umbilical surface.
\item[iii)] If $\<\va,\va\>=1$, then $K=-1/(\tau^2+1)$, $\ep=1$, and ${M(\tau)}$ is isometric to the Lorentzian hyperbolic plane, ${M(\tau)}\equiv\mathbb{H}^2_1(-\sqrt{\tau^2+1})$.
\end{enumerate}
Bearing (\ref{E13}) in mind, we obtain
\[
L_1\psi=\lambda\psi+b,\quad\text{where }\lambda=\frac{2\tau}{\delta^3}\big(\ep c\tau^2+\delta^2\big)\textrm{ and }b=-\frac{2\ep\tau^2}{\delta^3}\va.
\]
We distinguish three cases: \vspace{-0.2cm}
\begin{itemize}
\item If $\tau=0$ (and so $S=0$), then $L_1\psi=0$ and $M(0)$ is a null $L_1$-$1$-type surface (for $c=1$, $M(0)\equiv\S^2$ or $M(0)\equiv \S^2_1$, and for $c=-1$, $M(0)\equiv\H^2$ or $M(0)\equiv\H^2_1$).
\item If $\ep c\tau^2+\delta^2=0$, then $L_1\psi=b\neq0$ and so $M(\tau)$ is of infinite $L_1$-type. Observe that if $c=1$ then ${M(\tau)}\equiv\R^2$, and if $c=-1$ then  ${M(\tau)}\equiv\R_1^2$. In any case, ${M(\tau)}$ is a flat totally umbilical surface.
\item If $\la\neq0$, then we can write $\psi=\psi_0+\psi_1$ with $\psi_0=-(1/\lambda)b$ and  $\psi_1=\psi+(1/\lambda)b$. Then $L_1\psi_1=\la\psi_1$ showing that ${M(\tau)}$ is of $L_1$-$1$-type.
\end{itemize}
\end{ejem}

\begin{ejem}(\textit{Standard pseudo-Riemannian products in $\Mc$})\quad\label{ej3}
In this example we show, as in the Riemannian case, that the pseudo-Riemannian products are $L_1$-2-type surfaces.
Let $f:\Mc\longrightarrow\R$ be the differentiable function defined by
\[
f(x)=cx^2_1-\delta_2x_2^2+\delta_3 x_3^2+\delta_4x^2_{4},
\]
where $\delta_2,\delta_3,\delta_4\in\{0,1\}$ with $\delta_2+\delta_3+\delta_4=1$.
In short, $f(x)=\<Dx,x\>$, where $D$ is the diagonal matrix $D=\textrm{diag}[c,\delta_2,\delta_3,\delta_4]$. Then, for every $r>0$, and $\rho=\pm1$ with $r^2-c\rho\neq0$, the level set $M^2=f^{-1}(\rho r^2)$ is a surface in $\Mc$, unless it is empty.

The Gauss map and the shape operator are given by
\begin{equation}\label{Nex3}
N(x)=\frac{1}{r\sqrt{\big|\rho-cr^2\big|}}\ (Dx-\rho c r^2x)\quad\text{and}\quad
S=\frac{-1}{r\sqrt{\big|\rho-cr^2\big|}}\begin{bmatrix}
	1-\rho c r^2\\
	&-\rho c r^2
\end{bmatrix}.
\end{equation}
Therefore, by using~(\ref{E13}) we get that
$L_1\psi=\lambda_1D\psi+\lambda_2(\psi-D\psi)$,
where
\[
\lambda_1=\frac{2\ep H_{2}(\rho cr^2-1)}{r\sqrt{\big|\rho-cr^2\big|}}+2\ep c H\kern1cm \textrm{and} \kern1cm \lambda_2=\frac{2\ep H_{2}\rho c r^2}{r\sqrt{\big|\rho-cr^2\big|}}+2\ep c H.
\]
If we put $\psi_1=D\psi$ and $\psi_2=\psi-D\psi$, then $\psi=\psi_1+\psi_2$,
$L_1\psi_1=\lambda_1\psi_1$, and $L_1\psi_2=\lambda_2\psi_2$. Therefore, $M^2$ is an $L_1$-$2$-type surface in $\R^4_q$.

The following two tables show the distinct pseudo Riemannian products in $\Mc$.

\begin{center}
\def\arraystretch{1.5}
\begin{tabular}{|c|c|c|r|p{82mm}|}
\hline
\multicolumn{5}{|c|}{Standard products in $\mathbb{S}_1^{3}$}\\\hline
$\delta_2$ & $\delta_3$ & $\delta_4$ & $\rho$ & surface \\\hline
1 & 0 & 0 & $-1$ & $\mathbb{H}^1(-r)\times\mathbb{S}^{1}(\sqrt{1+r^2})$ \\\hline
1 & 0 & 0 & $1$ & $\mathbb{S}^1_1(r)\times\mathbb{S}^{1}(\sqrt{1-r^2})$\\\hline
0 & 1 & 0 & 1 &
\begin{picture}(0,0)
\put(0,-10){$\mathbb{S}^1(r)\times\mathbb{S}^{1}_1(\sqrt{1-r^2})$ or $\mathbb{S}^1(r)\times\mathbb{H}^{1}(-\sqrt{r^2-1})$}
\end{picture} \\
0 & 0 & 1 & 1 &
 \\\hline
\end{tabular}

\begin{tabular}{|c|c|c|r|p{90mm}|}
\hline
\multicolumn{5}{|c|}{Standard products in $\mathbb{H}_1^{3}$}\\\hline
$\delta_2$&$\delta_3$&$\delta_4$&$\rho$&surface \\\hline
1 & 0 & 0 & $-1$ & $\mathbb{H}^1_1(-r)\times\mathbb{S}^{1}(\sqrt{r^2-1})$\\\hline
0 & 1 & 0 & 1 &
\begin{picture}(0,0)
\put(0,-10){$\mathbb{S}^1_1(r)\times\mathbb{H}^{1}(-\sqrt{1+r^2})$}
\end{picture}\\
0 & 0 & 1 & 1 & \\\hline
0 & 1 & 0 & $-1$ &
\begin{picture}(0,0)
\put(0,-10){$\mathbb{H}^1(-r)\times\mathbb{S}^{1}(\sqrt{r^2-1})$ or $\mathbb{H}^1(-r)\times\mathbb{H}^{1}(-\sqrt{1-r^2})$}
\end{picture} \\
0 & 0 & 1 & $-1$ &  \\\hline
\end{tabular}
\end{center}
\end{ejem}

\begin{ejem}(\textit{Complex circle})\label{ej4a}\quad
This example shows a surface satisfying an equation similar to \eqref{Es20} but it is not a surface of $L_1$-2-type.
Given a complex number $k=a+bi$, where $a,b\in\mathbb{R}$ such that $a^2-b^2=-1$ and $ab\neq0$, the complex circle of radius $k$ is defined by Magid in \cite{Magid2} as follows. Since $\mathbb{C}^{2}$ can be identified with $\R_{2}^{4}$ by sending $(z,w)=(u_1+iu_2,x+iy)$ to $(u_1,x,u_2,y)$, the mapping $\psi: \mathbb{C}\equiv\mathbb{R}_1^2\rightarrow\mathbb{H}_1^3\subset\mathbb{C}^2\equiv\mathbb{R}_2^4$ given by $\psi(z)=k(\cos z, \sin z)$
is an isometric immersion of $\R_{1}^{2}$ into $\R_{2}^{4}$ with parallel second fundamental form. $\psi$ parameterizes a Lorentzian surface in $\H^3_1$ known as the \emph{complex circle} of radius $k=a+bi$, and the corresponding shape operator $S$ is expressed as
\[
\left[\begin{array}{@{\ColSep}c@{\Sep4}c@{\ColSep}}
\alpha & -\beta \\[4pt]
\beta  & \alpha
\end{array}
\right],
\quad\text{with }\alpha=\dfrac{2ab}{a^2+b^2}\quad\text{and}\quad\beta=\dfrac{1}{a^2+b^2}.
\]
Thus, a complex circle is a flat surface in $\H^3_1$ with constant mean curvature $H=\alpha$.
Finally, a straightforward computation yields
\[
L^2_1\psi=\frac{-4}{(a^2+b^2)^2}\psi,
\]
showing that the complex circle satisfies an equation similar to \eqref{Es20}. However, it is not an $L_1$-2-type surface since there are no two distinct real numbers $\lambda_1,\lambda_2$ satisfying \eqref{Es20}.
\end{ejem}

\begin{ejem}(\textit{$B$-scroll})\label{ej4}\quad
Finally, we will show an example with non-diagonalizable shape operator. This example also shows the importance of the Gaussian curvature in concluding whether or not the surface is of $L_1$-2-type.
Let $\gamma(s)$, $s\in I$, be a null curve in $\Mc\subset\mathbb{R}_q^4$ with associated Cartan frame $\{A=\gamma',B,C\}$, i.e., $\{A,B,C\}$ is a pseudo-orthonormal frame of vector fields along $\gamma(s)$
\[
\left\langle A,A\right\rangle=\left\langle B,B\right\rangle=0, \left\langle A,B\right\rangle=-1,
\left\langle A,C\right\rangle=\left\langle B,C\right\rangle=0, \left\langle C,C\right\rangle=1,
\]
such that
\[
\gamma'(s)=A(s),\quad
C'(s)=-a_0A(s)-\kappa(s)B(s),
\]
where $a_0$ is a nonzero constant and $\kappa(s)\neq0$ for all $s$. Then the map $\psi:I\times \R\to\Mc\subset\R^4_q$, given by $\psi(s,u)=\gamma(s)+uB(s)$, defines a Lorentzian surface $M_1^2$ in $\Mc$ known as a $B$-scroll on the null curve $\gamma$ (see \cite{DN} and \cite{Graves}).\\[\parskip]
A simple calculation shows that the vector field $N(s,u)$ given by $N(s,u)=-a_0uB(s)+C(s)$ defines a unit normal vector field to $M^2_1$ in $\Mc$, and the shape operator $S$ is expressed in the usual tangent frame $\big\{\partial_s \psi,\partial_u \psi\big\}$ as
\[
S=\left[\begin{array}{@{\ColSep}c@{\Sep4}c@{\ColSep}}
a_0 & 0 \\[4pt]
\kappa(s) & a_0
\end{array}\right],
\]
whose minimal polynomial is $(t-a_0)^2$. Consequently, a $B$-scroll is a Lorentzian surface with constant mean curvature $H=a_0$ and constant Gauss curvature $K=c+a_0^2$.

Now, let us check if the $B$-scroll is of $L_1$-$2$-type. From (\ref{J1}) and \eqref{E13} we find
\begin{align*}
L^2_1\psi&=(2HK)L_1\psi.
\end{align*}
Therefore, a non-flat $B$-scroll is a null $L_1$-$2$-type surface, whereas a flat $B$-scroll (i.e, $c=-1$, $H_2=a_0^2=1$) is an $L_1$-biharmonic surface of infinite type.
\end{ejem}

\section{Main results}
\label{s:main.results}

Now, we are in position to present our first main result.
\begin{teor}\label{SC1}
Let $\psi:M_s^2\to\Mc\subset\R^4_q$ be an orientable surface of $L_1$-$2$-type. Then $H$ is constant if and only if $H_2$ is constant.
\end{teor}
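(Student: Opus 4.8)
My plan is to work entirely from the three identities \eqref{tan}, \eqref{nor1}, \eqref{nor2} characterizing $L_1$-$2$-type surfaces, differentiating them and feeding in only the algebraic relations $P_1=-2\ep HI+S$ and $S\circ P_1=-H_2I$, together with \eqref{E11}, \eqref{nablaconN} and \eqref{LNa}. Suppose first that $H$ is constant, so $\nabla H=0$ and $L_1H=0$. Then \eqref{tan} collapses to $\lambda_1\lambda_2\,\va^\top=-3\ep\nabla H_2^2$, while in \eqref{nor2} every term on the right is constant except $4\ep H_2^2$; taking the gradient of \eqref{nor2} and recalling $\nabla\langle\va,\psi\rangle=\va^\top$ gives $\lambda_1\lambda_2\,\va^\top=4\ep\nabla H_2^2$. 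Subtracting, $7\ep\nabla H_2^2=0$, so $H_2^2$ is constant, hence (by continuity on the connected surface $\M$) so is $H_2$. This implication needs nothing about the $\lambda_i$.

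For the converse, suppose $H_2$ is constant, so $\nabla H_2=0$ and $L_1H_2=0$. Then \eqref{tan} reads $\lambda_1\lambda_2\,\va^\top=4\ep cP_1(\nabla H)$; applying $S$ and using $S\circ P_1=-H_2I$ gives $\lambda_1\lambda_2\,S\va^\top=-4\ep cH_2\nabla H$. On the other hand the right-hand side of \eqref{nor1} is now affine in $H$, so differentiating it and using $\nabla\langle\va,N\rangle=-S\va^\top$ from \eqref{nablaconN} yields $\lambda_1\lambda_2\,S\va^\top=4\ep H_2(c+\ep H_2)\nabla H$. Comparing the two gives the pointwise identity $H_2(2c+\ep H_2)\nabla H=0$. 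Since $H_2$ and $2c+\ep H_2$ are constants, either both are nonzero and then $\nabla H\equiv0$, or $H_2=0$ (that is, $K=c$) or $\ep H_2=-2c$ (that is, $K=-c$); these two borderline cases are where the work lies.

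In the case $\ep H_2=-2c$ I would argue by contradiction on a connected open set $V$ where $\nabla H\ne0$ (so $H$ is non-constant there). Applying $L_1$ to \eqref{nor1}, substituting \eqref{LNa} for $L_1\langle\va,N\rangle$ and \eqref{nor2}, and noting that the coefficient multiplying $L_1H$ comes out to $-4H_2^2\ne0$ since $H_2=-2\ep c$, one solves for $L_1H$ and finds it equals a fixed quadratic polynomial in $H$. Then $\nabla(L_1H)$ is a scalar multiple of $\nabla H$, and substituting this into the relation obtained by equating $\lambda_1\lambda_2\,\va^\top$ from \eqref{tan} with its value from the gradient of \eqref{nor2} gives $S(\nabla H)=(\alpha+\beta H)\nabla H$ on $V$ for constants $\alpha,\beta$: thus $\nabla H$ is a principal direction with principal curvature affine in $H$. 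Since $\tr S=2\ep H$ and $\det S=H_2$ is constant, this forces a polynomial identity in $H$ that cannot hold: its constant term says $\alpha^2=-H_2=2\ep c$, impossible when $\ep c=-1$, and when $\ep c=1$ it makes $\alpha\ne0$, after which the $H$-coefficient forces $\beta=\ep$ while the $H^2$-coefficient requires $\beta\in\{0,2\ep\}$. (If $S$ is non-diagonalizable somewhere on $V$, its repeated eigenvalue must be $\ep H$, so $H^2=\det S=H_2$ and $H$ is again constant.) Hence $\nabla H\equiv0$. The subcase $\lambda_1\lambda_2=0$ is immediate: \eqref{nor1} reduces to $0=2(\lambda_1+\lambda_2)H_2-4\ep HH_2(c+\ep H_2)$ with $H_2\ne0$ and $c+\ep H_2=-c\ne0$, so $H$ is constant.

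The case $H_2=0$ (so $S^2=2\ep HS$ and $K=c$) is the one I expect to be the main obstacle, since the purely formal play with \eqref{tan}--\eqref{nor2} no longer suffices and one must bring in the Gauss and Codazzi equations. Assuming $H$ non-constant, pick an open set where $H\ne0$ and $\nabla H\ne0$; there $S$ is self-adjoint with distinct eigenvalues $0$ and $2\ep H$, hence diagonalizable, with a smooth orthonormal eigenframe $\{E_1,E_2\}$, $SE_1=0$, $SE_2=2\ep HE_2$. The Codazzi equation forces $\nabla_{E_1}E_1=0$ and expresses $E_1(H)$ through the one surviving connection coefficient of the frame, whose $E_1$-derivative is then controlled by the Gauss equation $K=c$. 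If $\lambda_1\lambda_2\ne0$, \eqref{nor1} gives $\langle\va,N\rangle=0$, so by \eqref{tan} $\va^\top$ is a multiple of $E_1$; comparing the value of $\langle\va,\psi\rangle$ obtained from $\nabla_X\va^\top=-c\langle\va,\psi\rangle X$ (which is \eqref{E11} with $\langle\va,N\rangle=0$) with the one read off from \eqref{nor2} — in which $L_1H$ is evaluated in the frame using $K=c$ — everything collapses to $2\ep(\lambda_1+\lambda_2)H=c\lambda_1\lambda_2$, so $H$ is constant, a contradiction. If $\lambda_1\lambda_2=0$, \eqref{tan} gives $P_1(\nabla H)=0$, so $\nabla H$ spans the $2\ep H$-eigendirection, Codazzi then kills the remaining connection coefficients, $L_1H=0$, and \eqref{nor2} degenerates to $4cH^2-2\ep(\lambda_1+\lambda_2)H=0$, again forcing $H$ constant. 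In all cases $\nabla H\equiv0$, which completes the proof.
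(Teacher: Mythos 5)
Your proposal is correct and follows essentially the same route as the paper: the same gradient comparison of \eqref{tan} and \eqref{nor2} for the forward direction, and for the converse the same reduction (via $\nabla\langle a,N\rangle=-Sa^\top$ and $S\circ P_1=-H_2I$) to the two exceptional cases $H_2=-2\ep c$ and $H_2=0$, each handled by $L_1$-differentiating the characterizing equations and, in the second case, by a principal frame with Codazzi and Gauss. The only differences are in the endgame bookkeeping: you finish each case with an eigenvalue/determinant identity and an explicit split on $\lambda_1\lambda_2$ (your $\langle a,N\rangle=0$ shortcut in the $H_2=0$, $\lambda_1\lambda_2\neq0$ subcase is in fact slightly cleaner), whereas the paper applies $S$ once more and argues uniformly in $\lambda_1\lambda_2$; both versions check out.
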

\begin{proof}
Let us suppose that $H$ is constant. Our goal is
to prove that $H_2$ is also constant. Otherwise, let us consider the
non-empty open set
\[
\mathcal{U}_2=\big\{p\in M_s^2\;:\;\nabla H^2_2 (p)\neq0\big\}.
\]
By taking covariant derivative in (\ref{nor2}) we have $\la_1\la_2a^\top = 4\ep\nabla H^2_2$. From here and (\ref{tan}) we
get $\nabla H_2^2 = 0$ on $\mathcal{U}_2$, which is a contradiction. Hence, $H_2$ is constant.

Conversely, let us suppose now that $H_2$ is constant, and
consider the open set
\[
\mathcal{U}_1 = \big\{p\in M^2_s\;:\;\nabla H^2(p)\neq0\big\}.
\]
Our goal is to show that $\mathcal{U}_1$ is empty. Otherwise, by taking covariant derivative in (\ref{nor1})  we get
\[
\lambda_1\lambda_2Sa^\top=4\ep H_2(c+\ep H_2)\nabla H.
\]
From (\ref{tan}), and bearing in mind that $S\circ P_1 =-H_2I$, we get $\lambda_1\lambda_2Sa^\top= -4\ep c H_2\nabla H$, so that
\[
4\ep H_2(2c+\ep H_2)\nabla H=0.
\]
Consequently, on $\mathcal{U}_1$ we have either $H_2\equiv -2\ep c$ or $H_2\equiv 0$. We will study each case separately.

\noindent\textbf{Case 1:} $H_2\equiv -2\ep c$. By applying the operator $L_1$ on both sides of (\ref{nor1}), and using (\ref{nor2}), we get
\[
\lambda_1\lambda_2L_1\<a,N\>=-8L_1(H)=-4\big[\ep \lambda_1\lambda_2\<a,\psi\>-4\ep c H^2+2(\lambda_1+\lambda_2)H-\ep c \lambda_1\lambda_2-16\big].
\]
By using (\ref{LNa}) we obtain
\[
-c\lambda_1\lambda_2\<a,N\>H+\lambda_1\lambda_2\<a,\psi\>=-\lambda_1\lambda_2\<a,\psi\>+4cH^2- 2\ep(\lambda_1+\lambda_2)H+c\lambda_1\lambda_2+16\ep,
\]
and from (\ref{nor1}) we find
\[
\lambda_1\lambda_2\<\va,\psi\>=-2cH^2-3\ep(\lambda_1+\lambda_2)H+\frac{c\lambda_1\lambda_2}{2}+8\ep.
\]
By taking covariant derivative here, and using (\ref{tan}), we get
\[
\big[-4cH-3\ep(\lambda_1+\lambda_2)\big]\nabla H=-8cH\nabla H+4\ep c S(\nabla H),
\]
Now, by applying $S$ on both sides of this equation, and taking in mind that $S\circ P_1=2\ep c I$, we have
\[
\big[-4cH-3\ep(\lambda_1+\lambda_2)\big]S(\nabla H)=4\ep c S\circ P_1(\nabla H)=8\nabla H.
\]
From the last two equations we deduce
\[
16H^2-9(\lambda_1+\lambda_2)^2+32\ep c=0,
\]
and then $H$ is constant on $\mathcal{U}_1$, which is a contradiction.

\noindent\textbf{Case 2:} $H_2\equiv 0$, then $S(S-2\ep HI)=0$. Since a totally umbilical surface is not of $L_1$-2-type, we can assume that $M_s^2$ has two distinct principal curvatures $\kappa_1=0$ and $\kappa_2=2\ep H\neq0$.
Let $\{E_1,E_2\}$, with $\<E_i,E_i\>=\ep_i$, be a local orthonormal frame of principal directions of $S$ such that
\[
S=
\begin{pmatrix}
0&0\\
0&2\ep H
\end{pmatrix}.
\]
Write $\nabla H=\alpha E_1+\beta E_2$, where $\alpha=\ep_1 E_1(H)$, then we have $S(\nabla H)=2\ep H\beta E_2$. On the other hand, the Codazzi equation $(\nabla_{E_1}S)E_2=(\nabla_{E_2}S)E_1$ yields
\[
\ep_2 E_1(H)=-H\<\nabla_{E_2}E_1,E_2\>\quad\text{and}\quad 0=\<\nabla_{E_1}E_2,E_1\>.
\]
From here we deduce
$$\begin{array}{lllll}
	
\end{array}$$
\[
\begin{array}{l@{\quad}l@{\quad}l}
\nabla_{E_1}E_1=0,& \nabla_{E_1}E_2=0,\\[0.3cm]
\nabla_{E_2}E_1=-\dfrac{\ep_1\alpha}{H}E_2,&  \nabla_{E_2}E_2=\dfrac{\ep_1\ep_2\alpha}{H}E_1, &[E_1,E_2]=\dfrac{\ep_1\alpha}{H}E_2,
\end{array}
\]
and then the curvature tensor is given by (\cite{ONeill})
\[
R(E_1,E_2)E_1=\nabla_{E_1}\nabla_{E_2}E_1-\nabla_{E_2}\nabla_{E_1}E_1-\nabla_{[E_1,E_2]}E_1
=\Big[-E_1\Big(\frac{\ep_1\alpha}{H}\Big)+\Big(\frac{\ep_1\alpha}{H}\Big)^2\Big]E_2.
\]
On the other hand, since $\Mc$ is of constant curvature we have
\[
R(E_1,E_2)E_1=-\ep_1 cE_2.
\]
From the last two equations we have
\begin{align}\label{o1}
HE_1(\alpha)=cH^2+2\ep_1\alpha^2.
\end{align}
Now, since $P_1(E_1)=-\kappa_2E_1$ and $P_1(E_2)=0$, we get
\begin{align}\label{o2}
L_1(H)=-2\ep HE_1(\alpha),
\end{align}
that jointly with (\ref{nor2}) yields
\[
\lambda_1\lambda_2\<a,\psi\>=-2\ep(\lambda_1+\lambda_2)H+c\lambda_1\lambda_2-8\ep_1\alpha^2,
\]
and from here we get
\begin{align}\label{o3}
E_1(\lambda_1\lambda_2\<a,\psi\>)&=-2\ep(\lambda_1+\lambda_2)\ep_1\alpha-16\ep_1\alpha E_1(\alpha).	
\end{align}
However, from (\ref{tan}) we have
\begin{align}\label{d}
\lambda_1\lambda_2a^\top=-8 cH\alpha E_1,
\end{align}
so that
\begin{align}
E_1(\lambda_1\lambda_2\<a,\psi\>)&=\<\lambda_1\lambda_2a^\top,E_1\>=-8\ep_1 cH\alpha,
\end{align}
that jointly with (\ref{o3}) implies
\begin{align}\label{o4}
-4E_1(\alpha)&=-2cH+\frac{\ep(\lambda_1+\lambda_2)}{2}.
\end{align}
From here, and using (\ref{o2}) and (\ref{nor2}), we find
\begin{align}\label{d0} \lambda_1\lambda_2\<\va,\psi\>=2cH^2-\frac{3\ep}{2}(\lambda_1+\lambda_2)H+c\lambda_1\lambda_2.
\end{align}
Taking gradient in (\ref{d0}) and using (\ref{tan}) we get
\begin{align}\label{m2}
\Big[4cH-\frac{3\ep}{2}(\lambda_1+\lambda_2)\Big]\nabla H&=4\ep c P_1(\nabla H)=-8cH\nabla H+4\ep c S(\nabla H).
\end{align}
Now, by applying $S$ on both sides of (\ref{m2}) we have
\[
\big[4cH-\frac{3\ep}{2}(\lambda_1+\lambda_2)\big]S(\nabla H)=0.
\]
From the last two equations we find
\[
\big[4cH-\frac{3\ep}{2}(\lambda_1+\lambda_2)\big]\Big[3\ep H-\frac{3c}{8}(\la_1+\la_2)\Big]\nabla H=0,
\]
and this implies that $H$ is constant on $\mathcal{U}_1$, which is a contradiction.  This finishes the proof of Theorem \ref{SC1}.
\end{proof}

\begin{teor}\label{SC2}
Let $\psi:\M\to\Mc\subset\R^4_q$ be an orientable surface  of $L_1$-$2$-type. The following conditions are equivalent to each other:\vspace{-\topsep}
\begin{enumerate}
\item[$1)$] $\M$ has a constant principal curvature.
\item[$2)$] $H$ is constant.
\item[$3)$] $H_2$ is constant.
\end{enumerate}
\end{teor}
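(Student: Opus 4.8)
The plan is to leverage Theorem~\ref{SC1}, which already establishes the equivalence $2)\Leftrightarrow 3)$, and close the circle by proving $1)\Rightarrow 3)$ and, say, $2)\Rightarrow 1)$ (or $3)\Rightarrow 1)$). Actually, since $2)$ and $3)$ are interchangeable by Theorem~\ref{SC1}, it suffices to show: if one principal curvature is constant then $H$ (equivalently $H_2$) is constant; and conversely, if $H$ and $H_2$ are both constant then some principal curvature is constant. The second direction is immediate: from $Q_S(t)=t^2+a_1t+a_2$ with $a_1=-2\ep H$ and $a_2=H_2$ both constant, the characteristic polynomial has constant coefficients, so its roots are constant; in the diagonalizable cases I and II this gives constant principal curvatures directly, and in case III the unique principal curvature $\kappa$ satisfies $\kappa=\ep H$, hence constant. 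So the real content is $1)\Rightarrow 3)$.

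For $1)\Rightarrow 3)$ I would argue on the open set where the assumed-constant principal curvature, call it $\kappa_1\equiv\text{const}$, is genuinely a principal curvature, and split according to the algebraic type of $S$ in~\eqref{Formas}. In case I with two distinct principal curvatures $\kappa_1$ (constant) and $\kappa_2$, we have $2\ep H=\kappa_1+\kappa_2$ and $H_2=\kappa_1\kappa_2$, so $H$ constant $\Leftrightarrow$ $\kappa_2$ constant $\Leftrightarrow$ $H_2$ constant; thus it is enough to show $\kappa_2$ is constant. The idea is to feed the structure equations through~\eqref{tan}: since $P_1=-2\ep H I+S$, on the principal frame $\{E_1,E_2\}$ one computes $P_1(\nabla H)$ and $\nabla H_2^2=2H_2\nabla H_2$ explicitly in terms of $E_i(\kappa_2)$ (using $\kappa_1$ constant), and~\eqref{tan} forces $\la_1\la_2 a^\top$ to be a specific combination. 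Then taking a further derivative or pairing with~\eqref{nor1} and~\eqref{nor2}, as in the proof of Theorem~\ref{SC1}, should yield a polynomial relation in $\kappa_2$ with constant coefficients (the Codazzi equation, which reduces here because $E_1(\kappa_1)=E_2(\kappa_1)=0$, and the Gauss equation $K=c+\ep\kappa_1\kappa_2$ provide the extra leverage), forcing $\kappa_2$ constant. In case II the shape operator has no real principal curvature unless $b=0$, contradicting type II, so this case is vacuous under hypothesis $1)$; one should remark this. In case III there is a single principal curvature $\kappa$, and $\kappa$ constant is exactly $H$ constant (since $H=\ep\kappa$), hence $H_2=\kappa^2$ constant—so here $1)\Rightarrow 2)\Rightarrow 3)$ trivially.

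The main obstacle I anticipate is the case~I computation showing $\kappa_2$ is forced to be constant: unlike in Theorem~\ref{SC1}, where constancy of $H$ (or $H_2$) was assumed and one derived a contradiction on the set where $\nabla H\neq 0$, here neither $H$ nor $H_2$ is assumed constant a priori, so one must genuinely extract an algebraic constraint on $\kappa_2$ from the characterizing equations~\eqref{tan}--\eqref{nor2}. The strategy is to reduce the Codazzi equations using $\kappa_1\equiv\text{const}$ (which kills several connection terms, much as $H_2\equiv 0$ did in Case~2 of the previous proof), express $L_1(H)$ and $L_1(H_2)$ via the resulting structure coefficients, substitute into~\eqref{nor1} and~\eqref{nor2}, differentiate, and combine with~\eqref{tan} and~\eqref{d}-type identities to obtain a relation of the form $p(\kappa_2)\,\nabla\kappa_2=0$ with $p$ a nonzero polynomial with constant coefficients; nonvanishing of the leading coefficient (guaranteed by $\la_1\neq\la_2$ and $\kappa_1\neq\kappa_2$) then forces $\nabla\kappa_2=0$. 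Once case~I is handled, cases~II and~III are short remarks, and the theorem follows by invoking Theorem~\ref{SC1} to pass between $2)$ and $3)$.
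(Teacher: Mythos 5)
Your overall architecture matches the paper's: invoke Theorem~\ref{SC1} for $2)\Leftrightarrow 3)$, dispose of type~III trivially, and reduce everything to showing that in case~I a constant principal curvature $\kappa_1$ forces the other one to be constant. The problem is that this last step is the entire content of the theorem beyond Theorem~\ref{SC1}, and you only describe a strategy for it (``should yield a polynomial relation\dots'') while yourself flagging it as the main obstacle; as written this is not a proof. The mechanism that actually closes the argument is moreover not quite the one you anticipate. One may assume $\kappa_1$ is a nonzero constant (if $\kappa_1=0$ then $H_2=0$ is constant and Theorem~\ref{SC1} already finishes); rewriting \eqref{tan}--\eqref{nor2} in terms of $\kappa_2$, eliminating $L_1\kappa_2$ between the normal and radial equations, taking gradients and combining with the tangential equation yields, on the set where $\nabla\kappa_2\neq0$, an identity of the form $(3\varepsilon\kappa_1\kappa_2+2c)\,S(\nabla\kappa_2)=g(\kappa_1,\kappa_2)\,\nabla\kappa_2$. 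The clincher is then not the nonvanishing of a leading coefficient of some polynomial $p(\kappa_2)$, but the observation that $\nabla\kappa_2$ is thereby an eigenvector of $S$, so the factor $g(\kappa_1,\kappa_2)/(3\varepsilon\kappa_1\kappa_2+2c)$ must equal $\kappa_1$ or $\kappa_2$; either alternative is a nontrivial algebraic equation in $\kappa_2$ with constant coefficients, forcing $\kappa_2$ to be locally constant --- a contradiction. No Codazzi or Gauss equation is needed in this direction, contrary to what your sketch suggests.

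There is also a genuine error in your $2)\Rightarrow1)$ direction: in case~II of \eqref{Formas} the eigenvalues of $S$ are $\kappa\pm ib$ with $b\neq0$, so $S$ has no real principal curvature at all, and constancy of the coefficients of $Q_S$ does \emph{not} produce a constant principal curvature there --- condition $1)$ would simply fail. One must exclude type~II outright, which requires input beyond the characteristic polynomial: constancy of $H$ and $H_2$ makes the surface isoparametric, type-II isoparametric surfaces do not exist in $\S^3_1$, and the only ones in $\H^3_1$ are the complex circles, which by Example~\ref{ej4a} are not of $L_1$-$2$-type. Your later remark that case~II is vacuous ``under hypothesis $1)$'' handles the converse direction but does not repair this one.
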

\begin{proof}
From Theorem \ref{SC1} we know that conditions 2) and 3) are equivalent to each other. Therefore, it is sufficient to prove the equivalence of conditions 1) and 2).

If condition 2) holds then $\M$ is an isoparametric surface since condition 3) also holds. Then the shape operator $S$ of $\M$ in $\Mc$ is of type I (with constant $\kappa_1$ and $\kappa_2$) or type III (with constant $\kappa$), so we have condition 1). Observe that $S$ cannot be of type II since this kind of surfaces do not exist in $\S^3_1$ and the complex circle in $\H^3_1$ is not of $L_1$-2-type.

Let us now assume that condition 1) is satisfied, then $S$ is of type I or type III. If $S$ is of type III then $\ka$ is constant, and therefore $H$ and $H_2$ are also constants.

Let us consider the case where $S$ is diagonalizable with two distinct principal curvatures $\ka_1\neq\ka_2$, and assume without loss of generality that $\ka_1$ is a nonzero constant
(otherwise, $H_2=0$ and the Theorem \ref{SC1} applies). Consider the open set
\[
\mathcal{U} = \big\{p\in M^2_s\;:\;\nabla\ka_2^2(p) \neq 0\big\}.
\]
Our goal is to show that $\mathcal{U}$ is empty.
Otherwise, the equations (\ref{tan})--(\ref{nor2}) of a $L_1$-$2$-type surface can be rewritten in terms of $\ka_2$ as
follows
\begin{align}
\la_1\la_2a^\top &= [-6\ep\ka^2
_1\ka_2 -2c(\ka_1 + \ka_2)]\nabla\ka_2 + 2cS(\nabla\ka_2),\label{28}\\
\la_1\la_2\<a,N\> &=
2\ka_1\ka_2[\la_1 + \la_2-(\ka_1 + \ka_2)(c+\ep\ka_1\ka_2)]-2\ka_1L_1\ka_2,\label{29}\\
\la_1\la_2\<\va,\psi\> &= 4\ep\ka^2_1\ka^2_
2 + c(\ka_1 + \ka_2)^2 - (\la_1 + \la_2)(\ka_1 + \ka_2) + c\la_1\la_2+ L_1\ka_2.\label{30}
\end{align}
From (\ref{29}) and (\ref{30}) we find
\[
\la_1\la_2\<a, N\> =-2\ka_1\la_1\la_2 \<\va,\psi\> + 2\ka_1[
3\ep\ka^2_1\ka^2_2 + c\ka^2_1 + c\ka_1\ka_2 -(\la_1 + \la_2)\ka_1 + c\la_1\la_2-\ep\ka_1\ka^3_2].
\]
By applying the gradient here we obtain
\begin{equation}\label{31}
-\la_1\la_2Sa^\top = -2\ka_1\la_1\la_2a^\top + 2\ka^2_1[c +
6\ep\ka_1\ka_2-3\ep\ka_2^2]\nabla \ka_2.
\end{equation}
On the other hand, by (\ref{28}) we get
\begin{equation}\label{32}
\la_1\la_2Sa^\top = -6\ep\ka^2_1\ka_2S(\nabla\ka_2)-2c\ka_1\ka_2\nabla \ka_2
\end{equation}
Now, from (\ref{28}), (\ref{31}) and (\ref{32}) we deduce
\[
(3\ep\ka_1\ka_2 + 2c)S(\nabla\ka_2) = (12\ep\ka^2_1\ka_2 -3\ep\ka_1\ka^2_2 +  c\ka_2 + 3c\ka_1)\nabla\ka_2.
\]
Since $3\ka_1\ka_2 + 2c\neq 0$ (otherwise, $\ka_2$ would be constant), we deduce
\[
S(\nabla\ka_2) = f (\ka_1, \ka_2)\nabla\ka_2,\qquad f(\ka_1, \ka_2) =\frac{12\ep\ka^2_1\ka_2 -3\ep\ka_1\ka^2_2 +  c\ka_2 + 3c\ka_1}{
(3\ep\ka_1\ka_2 + 2c)}.
\]
This equation implies that either $f (\ka_1, \ka_2) = \ka_1$ or $f(\ka_1, \ka_2) = \ka_2$. In any case it follows
that $\ka_2$ is constant on $\mathcal{U}$, and this is a contradiction. This finishes the proof of Theorem \ref{SC2}.
\end{proof}

As a consequence of theorems \ref{SC1} and \ref{SC2}, the examples \ref{ej2}--\ref{ej4} and the classification of isoparametric surfaces in $\Mc$,  see \cite{AFL94,X99,ZX}, we have the following characterization of $L_1$-$2$-type surfaces in $\Mc$.

\begin{teor}
Let $\psi: \M\to\S^3_1 \subset\R^4_1$ be an orientable surface of $L_1$-2-type. Then
one of the following conditions is satisfied\vspace{-\topsep}
\begin{itemize}\itemsep0pt
\item $\M$ an open portion of a standard pseudo-Riemannian product: $\S^1_1(\sqrt{1-r^2})\times \S^1(r)$ or $\H^1(\sqrt{1-r^2})\times \S^1(r)$.
\item $M^2_1$ is a $B$-scroll over a null curve.
\item  $\M$ has non constant mean curvature, non constant Gaussian curvature, and non constant principal curvatures.
\end{itemize}
\end{teor}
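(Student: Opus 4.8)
The plan is to combine the structural dichotomy provided by Theorem~\ref{SC2} with the classification of isoparametric surfaces in $\S^3_1$ and with the examples of Section~3. First I would split the argument according to whether $\M$ has a constant principal curvature. Suppose it does \emph{not}. Then by the equivalences in Theorem~\ref{SC2} the mean curvature $H$ is non-constant and the second mean curvature $H_2$ is non-constant; since the Gauss equation gives $K=c+\ep H_2$, the Gaussian curvature is non-constant as well, and of course no principal curvature is constant. This is exactly the third alternative in the statement, so in this case there is nothing left to prove.

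Next I would treat the case in which $\M$ has a constant principal curvature. By Theorem~\ref{SC2} both $H$ and $H_2$ are then constant, so $\tr(S)$ and $\det(S)$ are constant, hence the characteristic polynomial of $S$ is constant and $\M$ is an isoparametric surface in $\S^3_1$. Invoking the classification of isoparametric surfaces in $\S^3_1$ (see \cite{AFL94,X99,ZX}) together with the description of the shape operator in \eqref{Formas}, I get that, up to an open portion, $\M$ is either totally umbilical (type I with $\ka_1=\ka_2$), a standard pseudo-Riemannian product (type I with $\ka_1\neq\ka_2$, both constant), or a $B$-scroll over a null curve (type III with constant $\ka$); surfaces whose shape operator is of type~II do not occur, since such surfaces do not exist in $\S^3_1$ (as already used in the proof of Theorem~\ref{SC2}).

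It then remains to decide which of these isoparametric surfaces are of $L_1$-$2$-type, using Section~3. By Example~\ref{ej2} a totally umbilical surface in $\S^3_1$ is of $L_1$-$1$-type or of infinite $L_1$-type, hence never of $L_1$-$2$-type, and this case is discarded. By Example~\ref{ej3} a standard pseudo-Riemannian product is of $L_1$-$2$-type precisely when $\lambda_1\neq\lambda_2$, which by the formulas there amounts to $H_2\neq 0$; a short computation with the shape operator \eqref{Nex3} shows $H_2\neq0$ for every entry of the $\S^3_1$ table in Example~\ref{ej3}, and those entries collapse, after reparametrizing the radii, to the two families $\S^1_1(\sqrt{1-r^2})\times\S^1(r)$ and $\H^1(\sqrt{1-r^2})\times\S^1(r)$ of the statement. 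Finally, by Example~\ref{ej4} a $B$-scroll in $\S^3_1$ has $K=c+a_0^2=1+a_0^2>0$, so it is automatically non-flat and therefore a (null) $L_1$-$2$-type surface. Collecting the three possibilities yields the first two alternatives and, together with the non-isoparametric case, finishes the proof.

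The only genuinely non-routine ingredient is the external classification of isoparametric Lorentzian surfaces in $\S^3_1$, which I would quote as a black box; everything else is bookkeeping with Theorem~\ref{SC2} and the examples. I expect the fiddliest point to be matching the entries of the $\S^3_1$ table in Example~\ref{ej3} with the two product families in the statement, that is, confirming $\lambda_1\neq\lambda_2$ (equivalently $H_2\neq0$) for each of them and choosing the ranges of the radii $r$ consistently.
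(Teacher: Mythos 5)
Your proposal is correct and follows exactly the route the paper intends: the paper itself offers no written proof beyond the remark that the theorem follows from Theorems \ref{SC1} and \ref{SC2}, Examples \ref{ej2}--\ref{ej4}, and the external classification of isoparametric surfaces in $\Mc$, which is precisely the dichotomy and bookkeeping you carry out. Your added verifications (that $H_2\neq0$, hence $\lambda_1\neq\lambda_2$, for every product in the $\S^3_1$ table, and that $B$-scrolls in $\S^3_1$ are automatically non-flat) are accurate and in fact make the argument more complete than the paper's own presentation.
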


\begin{teor}
Let $\psi: M_s^2\to\H^3_1 \subset\R^4_2$ be an orientable surface of $L_1$-2-type. Then one of the following conditions is satisfied\vspace{-\topsep}
\begin{itemize}\itemsep0pt
\item $\M$ is an open portion of a standard pseudo-Riemannian product: $\S^1_1(r)\times \H^1(-\sqrt{r^2+1)}$, or $\H^1(-r)\times\H^1(-\sqrt{1-r^2})$, or $\H^1_1(-r)\times \S^1(\sqrt{r^2-1})$
\item $M^2_1$ is a non-flat $B$-scroll over a null curve.
\item $\M$ has non constant mean curvature, non constant Gaussian curvature, and non constant principal curvatures.
\end{itemize}
\end{teor}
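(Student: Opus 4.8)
The plan is to feed Theorem~\ref{SC2} into the classification of isoparametric surfaces of $\H^3_1$, and then to sieve the resulting list through the explicit $L_1$-computations of Examples~\ref{ej2}--\ref{ej4}.

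I would begin with the trichotomy provided by Theorem~\ref{SC2}: either $M_s^2$ has a constant principal curvature --- equivalently $H$ is constant, equivalently $H_2$ is constant --- or none of these three conditions holds. In the second case $H$ is non-constant; the sign $\ep=\langle N,N\rangle$ is locally constant, so from the Gauss equation $K=c+\ep H_2$ (with $c=-1$) the Gaussian curvature $K$ is non-constant, and the principal curvatures are non-constant as well. This is precisely the third item of the statement. Hence it remains to treat the case in which the principal curvatures are constant, so that the characteristic polynomial $Q_S(t)=t^2+a_1t+a_2$ has constant coefficients $a_1=-2\ep H$, $a_2=H_2$, i.e.\ $M_s^2$ is isoparametric in $\H^3_1$.

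In that case I would invoke the classification of isoparametric surfaces of the three-dimensional anti De Sitter space (\cite{AFL94,X99,ZX}): up to congruence, such a surface is an open portion of a totally umbilical surface, of a standard pseudo-Riemannian product (shape operator of type~I with two distinct constant principal curvatures), of a complex circle (type~II with constant entries), or of a $B$-scroll over a null curve (type~III with constant $\ka$). I would then discard the possibilities that cannot be of $L_1$-$2$-type. By Example~\ref{ej2}, a totally umbilical surface is of $L_1$-$1$-type or of infinite $L_1$-type, so it is excluded. By Example~\ref{ej4a}, a complex circle satisfies $L_1^2\psi=-\frac{4}{(a^2+b^2)^2}\psi$, which matches \eqref{Es20} only with $\lambda_1=\lambda_2$, so it is excluded too; in particular the shape operator of an $L_1$-$2$-type surface in $\H^3_1$ is never of type~II. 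If the shape operator is of type~III, Example~\ref{ej4} gives $L_1^2\psi=2HK\,L_1\psi$, and comparison with \eqref{Es20} forces $K\neq0$ (a flat $B$-scroll being $L_1$-biharmonic of infinite type); this is the second item. Finally, if the shape operator is diagonalizable with two distinct constant principal curvatures, then by Example~\ref{ej3} $M_s^2$ is, up to congruence, one of the standard pseudo-Riemannian products tabulated there, with $L_1\psi=\lambda_1\psi_1+\lambda_2\psi_2$ and $\lambda_1-\lambda_2$ a nonzero multiple of $H_2$; retaining only those with $H_2\neq0$ (so that $\lambda_1\neq\lambda_2$ and the surface genuinely is of $L_1$-$2$-type) and collapsing congruent parameter families, one is left with exactly the products displayed in the first item.

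The only substantial ingredient beyond Theorems~\ref{SC1}--\ref{SC2} and the Examples is the (non-trivial) Lorentzian isoparametric classification, which I would simply cite, stressing that in the anti De Sitter ambient it really does produce the type~III family (and, a priori, the type~II one). The main labour is then bookkeeping: running through the tables of Example~\ref{ej3} to decide which standard products have $H_2\neq0$, matching the surviving $(\delta_2,\delta_3,\delta_4,\rho,r)$-families to the named products up to congruence and reparametrization, and tracking the sign $\ep$ when passing between $H_2$ and $K$. Beyond that no computation is required.
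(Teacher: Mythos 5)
Your proposal follows exactly the route the paper takes: the paper states this theorem as a direct consequence of Theorems~\ref{SC1}--\ref{SC2}, Examples~\ref{ej2}--\ref{ej4} and the cited isoparametric classification, and you have simply written out the sieving argument in full. The details you supply (trichotomy from Theorem~\ref{SC2}, exclusion of umbilical surfaces, complex circles and flat $B$-scrolls, retention of the products with $H_2\neq0$) are correct and consistent with the examples.
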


\section*{Acknowledgements}

This research is part of the grant PID2021-124157NB-I00, funded by MCIN/ AEI/ 10.13039/ 501100011033/ ``ERDF A way of making Europe''. Also supported by ``Ayudas a proyectos para el desarrollo de investigación científica y técnica por grupos compe\-ti\-ti\-vos'', included in the ``Programa Regional de Fomento de la Investigación Científica y Técnica (Plan de Actuación 2022)'' of the Fundación Séneca-Agencia de Ciencia y Tecnología de la Región de Murcia, Ref. 21899/PI/22.

\section*{Conflict of interest}
The authors declare that they have no conflict of interest.

\small\scshape
\parindent0pt
\parskip\baselineskip
\def\correo#1{\textup{#1}}

\noindent
S. Carolina Garc\'ia-Mart\'inez\\
Departamento de Matem\'aticas\\
Universidad Nacional de Colombia\\
Sede Bogot\'a, Colombia\\
\correo{sacgarciama@unal.edu.co}

\noindent
Pascual Lucas \\
Departamento de Matemáticas\\
Universidad de Murcia\\
Murcia, Espa\~na\\
\correo{plucas@um.es}

\noindent
H. Fabi\'an Ram\'irez-Ospina \\
Departamento de Matem\'aticas\\
Universidad Nacional de Colombia\\
Sede Bogot\'a, Colombia\\
\correo{hframirezo@unal.edu.co}


\begin{thebibliography}{99}

\bibitem{AFL94}
L. J. Al\'ias, A. Ferr\'andez and P. Lucas. 2-type surfaces in $\S^3_1$ and $\H_1^3$, \emph{Tokyo J. Math.} \textbf{17} (1994), 447--454.

\bibitem{C1984} B. Y. Chen. Total mean curvature and submanifolds of finite type, \emph{Volume 1 of Series in Pure Math.} World Scientific, Singapur, 1984.

\bibitem{CP} B. Y. Chen and M. Petrovic. On spectral decomposition of immersions of finite type, \emph{Bull. Austral. Math. Soc.} \textbf{44} (1991), 117--129.

\bibitem{66} B. Y. Chen, F. Dillen, L. Verstraelen and L. Vrancken, A variational minimal principle characterizes submanifolds of finite type, \emph{C.R. Acad. Sc. Paris, 317} (1993), 961--965.

\bibitem{CY} S. Y. Cheng and S. T. Yau. Hypersurfaces with constant scalar curvature, \emph{Math. Ann.} \textbf{225} (1977), 195--204.

\bibitem{DN} M. Dajczer and K. Nomizu. On flat surfaces in $\mathbb{S}_1^3$ and $\mathbb{H}_1^3$, \emph{Manifolds and Lie Groups}, Univ. Notre Dame, Birkhäuser (1981), 71--108.

\bibitem{Graves} L. Graves. Codimension one isometric immersions between Lorentz spaces, \emph{Trans. Amer. Math. Soc.} \textbf{252} (1979), 367--392.

\bibitem{LR2011} P. Lucas and H. F. Ram\'irez-Ospina. Hypersurfaces in the Lorentz-Minkowski space satisfying $L_k\psi=A\psi+b$, \emph{Geom. Dedicata},  \textbf{153} (2011), 151--175.

\bibitem{LR2012} P. Lucas and H. F. Ram\'irez-Ospina. Hypersurfaces in non-flat Lorentzian space forms satisfying $L_k\psi=A\psi+b$, \emph{Taiwanese J. Math.} \textbf{16} (2012), 1173--1203.

\bibitem{LR2017} P. Lucas and H. F. Ramirez-Ospina. Hyperbolic surfaces of $L_1$-2-type. \emph{Bull. Iranian Math. Soc.} \textbf{43} (2017), no. 6, 1769--1779.

\bibitem{LR2018} P. Lucas and H. F. Ram\'irez-Ospina. Surfaces in $S^3$ of $L_1$-2 type. \emph{Bull. Malays. Math. Sci. Soc.} \textbf{41} (2018), no. 4, 1759--1771.

\bibitem{Magid2} M. A. Magid. Isometric immersions of Lorentz space with parallel second fundamental forms, \emph{Tsukuba J. Math.} \textbf{8} (1984), 31--54.

\bibitem{ONeill} B. O'Neill. \emph{Semi-Riemannian Geometry With Applications to Relativity}, Academic Press, 1983, New York London.

\bibitem{X99} L. Xiao. Lorentzian isoparametric hypersurfaces in $\H_1^{3}$, \emph{Pacific J. Math.} \textbf{189} (1999), 377--397.

\bibitem{ZX} L. Zhen-Qi and X. Xian-Hua. Space-like isoparametric hypersurfaces in Lorentzian space forms, \emph{J. Nanchang Univ. Nat. Sci. Ed.} \textbf{28} (2004), 113--117. See also \emph{Front. Math. China} \textbf{1} (2006), 130--137.

\end{thebibliography}
\end{document}